\newtheorem{proposition}{Proposition}
\newtheorem{theorem}{Theorem}
\begin{document}

\begin{center}
{\bf \Large 
On restrained coalitions in graphs: \\[1mm]  bounds and exact values}
\end{center}

\begin{center}
Andrey A. Dobrynin, Aleksey N. Glebov, H. Golmohammadi

\vspace{2mm}

\emph{Sobolev Institute of Mathematics, 630090, Novosibirsk, Russia}

dobr@math.nsc.ru, angle@math.nsc.ru, h.golmohammadi@g.nsu.ru
\end{center}


\noindent{\bf Abstract:}
A subset $D \subseteq V$ is a dominating set of a graph $G$ with vertex set $V$ if 
every vertex $v \in V \setminus D$ is adjacent to a vertex in $D$.
Two subsets of $V$ form a coalition if neither of them 
is a dominating set, but their union is a dominating set.
A coalition partition of $G$ is its vertex partition $\pi$ such that 
every non-dominating set of $\pi$ is a member of some coalition,
and every dominating set is a single-vertex set in $\pi$.
The coalition number $C(G)$ of a graph $G$ is the maximum cardinality of its 
coalition partitions. 
A subset $R \subseteq V$ is a restrained dominating set 
if $R$ is a dominating set and any vertex of $V \setminus R$  has at least one neighbor in $V \setminus R$.
Restrained dominating coalition, restrained dominating partition and restrained coalition number $RC(G)$ 
are defined by the same way.
In this paper, we prove that $RC(G) \le C(G)$ for an arbitrary graph $G$.
In addition, the restrained coalition numbers of cycles and trees are determined.

\medskip

\noindent{\bf Keywords:} restrained dominating set, coalition partition, coalition number.

\

\section{Introduction}
In the present paper, finite simple connected graphs $G(V,E)$ are considered.
The order of a graph is the number of its vertices, $n=|V(G)|$.
The minimum and maximum vertex degree of a graph $G$ is denoted by $\delta(G)$ and $\Delta(G)$, respectively. 
The closed neighborhood of a vertex $v$ is the set $N[v]=\{u \in V(G)\, |\, uv \in E(G)\}$. 
The path, the cycle, the star, and the complete graph of order $n$ are denoted by $P_n$, $C_n$, $S_n$, and $K_n$,
respectively.

A subset $D \subseteq V(G)$ is a dominating set if 
every vertex $v \in V(G) \setminus D$ is adjacent to a vertex of $D$.
The domination theory is a classical branch in the graph theory that  has found 
numerous applications.
For a broader introduction to domination in graphs, we refer the reader to books 
\cite{Hat2020,Hayn2021,Hay2023book,Hay1998,Hen2013}.
A dominating set $D$ is called a restrained dominating set (RD-set) if 
every vertex $v \in V(G) \setminus D$ is adjacent to at least one vertex of  $V(G) \setminus D$,
that is, the induced subgraph on vertices $V(G) \setminus D$ has no isolated vertex.
The restrained domination number $\gamma_r(G)$ is the minimum cardinality
of RD-sets in $G$. The concept of restrained domination was introduced in \cite{Dom1999}. 
The main results on restrained domination up to 2020 were collected in
\cite{Hat2020}. 
So far,  hundreds works have been devoted to this topic (see selected articles 
\cite{Bre2024,Che2026,Dan2006,Nes2025,Zel2005}).

 In \cite{Hay2020}, Haynes et al. introduced the concept of coalitions in graphs.
 Two subsets of $V(G)$ form a dominating coalition if neither of them is dominating, but their union is a dominating set. 
A coalition partition of $G$ is its vertex partition $\pi$ such that 
every non-dominating set of $\pi$  is a member of some coalition, and every dominating
set is a single-vertex set in $\pi$. The coalition number $C(G)$ of a graph $G$ is the maximum
cardinality of its coalition partitions. 
Many variations of the coalition of a graph have been studied in 
\cite{Alik2023,Bakh2023,Dob2025,Dob2024,Hay2023,Hayn2023a,Hayn2021,Hay2023b}.
One of the recent variations is the restrained dominating coalition \cite{Dom1999}.
Let $V_1$  and $V_2$ be two disjoint non-RD-sets of $V(G)$. 
They form a restrained dominating coalition (RD-coalition) if their union is an RD-set. 
A vertex partition $\pi = \{ V_1, V_2,\dots, V_k\}$ of $V(G)$ is called a restrained dominating 
coalition partition (RD-partition) of a graph $G$ if every non-RD-set of $\pi$ is a member of an RD-coalition, 
and every RD-set in $\pi$ is a single-vertex set.
Note that all sets in the RD-partition of $K_n$ are RD-sets.
We only take into account such RD-partitions for non-complete graphs unless $K_n$, since every graph has an 
RD-partition in which all sets are not RD-sets.
The  restrained dominating coalition number $RC(G)$ of  a graph $G$
is the maximum cardinality of RD-partitions of $G$. 
The restrained dominating coalition graph, denoted by $RCG(G,\pi)$, is obtained by associating 
the partition sets of $\pi$ with vertices of the graph, and two vertices  are adjacent if and only 
if the corresponding sets form an RD-coalition in $G$. 
 
In this paper, we consider the restrained dominating coalition number for some classes of graphs.
In particular, we show that $RC(G) \le C(G)$ for an arbitrary graph $G$.
We shall correct the restrained dominating coalition number of cycles  which was obtained in 
\cite{Nes2025}.

\section{Upper bounds on the restrained coalition number}

We show that every RD-partition of a graph $G$ 
can be transformed into a coalition partition of $G$ with larger or equal cardinality.  
This implies that a restrained dominating coalition number cannot exceed the 
coalition number.

\begin{theorem} \label{T1}
Let  $G$ be  a graph of order $n \ge 2$. Then $RC(G) \le C(G)$. 
\end{theorem}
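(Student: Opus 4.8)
The plan is to take an arbitrary RD-partition $\pi = \{V_1, \dots, V_k\}$ of $G$ witnessing $RC(G) = k$ and show that it is, or can be converted into, a coalition partition of the same order, which immediately yields $k = RC(G) \le C(G)$. The key observation making this work is the containment of set families: every RD-set is a dominating set, hence every non-dominating set is a non-RD-set. Consequently, if two parts $V_i, V_j$ satisfy that $V_i \cup V_j$ is dominating while neither is dominating, I need to relate this to the RD-coalition structure; conversely, if $V_i \cup V_j$ is an RD-set with $V_i, V_j$ both non-RD-sets, then $V_i \cup V_j$ is in particular a dominating set, so this pair forms a dominating coalition provided neither $V_i$ nor $V_j$ is itself dominating.

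First I would dispose of the complete-graph case separately, since by the convention stated in the excerpt the RD-partition of $K_n$ consists of RD-sets; here $RC(K_n) = n = C(K_n)$ is direct. For non-complete $G$, I take the optimal RD-partition $\pi$. The parts split into RD-sets (which are singletons by definition of RD-partition, and are automatically dominating singletons) and non-RD-sets. For the non-RD-parts, each is a member of some RD-coalition, so for each such $V_i$ there is $V_j$ with $V_i \cup V_j$ an RD-set, hence dominating. The remaining task is to check the two defining conditions of a coalition partition: (a) every dominating set in $\pi$ is a singleton, and (b) every non-dominating set in $\pi$ belongs to a dominating coalition. For (a): a dominating part that happens not to be an RD-set need not be a singleton in $\pi$, so this is the point requiring care — I expect this to be the main obstacle. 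For (b): a non-dominating part $V_i$ is in particular a non-RD-set, so it has an RD-coalition partner $V_j$; then $V_i \cup V_j$ is an RD-set, hence dominating; and $V_i$ is not dominating by assumption, so it only remains to ensure $V_j$ is not dominating either — but if $V_j$ were dominating, then in the coalition graph we still need $\{V_i, V_j\}$ to be a coalition, which fails, so I would instead argue that $V_j$ can be replaced or that $V_j$ being dominating forces it to be a singleton RD-set, contradicting that $V_i \cup V_j$ is needed as an RD-set (since $V_j$ alone, being a dominating singleton, might still not be restrained).

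The way I would handle obstacle (a) and the $V_j$-dominating subtlety together is to modify $\pi$: whenever a part $W$ is dominating but not a singleton, split off its vertices, or more carefully, merge considerations so that dominating non-singleton parts get broken up into singletons — but splitting can only increase the number of parts, and one must re-verify the RD-coalition conditions survive, which they may not. A cleaner route: argue that in an optimal RD-partition of a non-complete graph, \emph{every} dominating part is in fact an RD-set and hence already a singleton. The reason to hope this holds: if $W \in \pi$ is dominating but not restrained, consider replacing $W$ by splitting it so that the isolated-in-$V\setminus W$ vertices are reassigned; one shows the resulting partition is still an RD-partition (the pieces are non-RD-sets that can be put into RD-coalitions using $W$'s old partners plus each other) of at least the same order, and iterate. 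Then in the terminal partition every dominating part is an RD-singleton, every non-dominating part is a non-RD-set with an RD-coalition partner that is also non-dominating (else that partner is a dominating singleton, and one checks the union of two singletons being an RD-set is impossible unless $n$ is tiny, a case handled directly), so $\pi$ is a genuine coalition partition and $C(G) \ge |\pi| \ge RC(G)$.

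The main obstacle, to restate, is precisely that the coalition-partition axioms treat \emph{dominating} sets as privileged (must be singletons, cannot be coalition members) whereas the RD-partition axioms privilege \emph{RD-sets}, and a set can be dominating without being restrained. Bridging this gap — showing such "dominating but not restrained" parts either do not arise in an optimal RD-partition or can be eliminated without decreasing the part count — is the crux; everything else is the routine set-inclusion argument $\text{RD-set} \Rightarrow \text{dominating set}$.
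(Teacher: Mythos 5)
Your high-level strategy is the same as the paper's: keep the optimal RD-partition $\pi$ essentially intact (non-dominating parts are automatically non-RD-sets whose RD-coalition partners give dominating coalitions) and deal with the one genuine obstruction, namely parts that are dominating but not RD-sets. You have correctly located the crux. However, the proposal stops at exactly the point where the proof has to be done, and the one concrete route you offer is based on a false claim. It is \emph{not} true that in an optimal RD-partition of a non-complete graph every dominating part is an RD-set: take $P_3=a\,b\,c$ with $\pi=\{\{a,c\},\{b\}\}$; both parts are dominating, neither is an RD-set, their union is an RD-set, and this realizes $RC(P_3)=2$. The paper's Figures 1 and 3 give further examples where the \emph{optimal} RD-partition necessarily contains a dominating non-RD part. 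So the "cleaner route" collapses, and you are left with the splitting procedure, which you do not actually specify in a workable form. Reassigning the vertices that are isolated in $V\setminus W$ to other parts changes those parts and can destroy their RD-coalitions, and your stated goal for the iteration ("the resulting partition is still an RD-partition") is the wrong target: what must be produced at the end is a \emph{coalition} partition, verified against the coalition axioms, of cardinality at least $k$.

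Concretely, three things are missing. First, the case of \emph{two} dominating non-RD parts needs its own argument: the paper shows that if $V_1,V_2$ are both dominating and non-restrained, then the witnessing vertices satisfy $u\in V_1$, $v\in V_2$, no other part can have an RD-coalition partner, hence $k=2$, and one concludes from $C(G)\ge 2$. Your iteration scheme does not see this case. Second, for a single dominating non-RD part $V_1$, the device that makes the splitting work is to write $V_1=M\cup\{w_1,\dots,w_m\}$ with $M$ a \emph{maximal non-dominating} subset of $V_1$, so that each $M\cup\{w_i\}$ is dominating by maximality and the new singletons $\{w_i\}$ all have $M$ as a coalition partner; nothing in your sketch supplies the coalition partners for the pieces you create. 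Third, even with that device there is a residual case analysis: vertices of degree $n-1$ inside $V_1$ become dominating singletons (which are legal in a coalition partition but cannot be split further), and one must separately check whether the remainder $W$ is dominating, and whether $W\cup V_2$ is dominating, to see that the resulting partition has cardinality at least $k$ and satisfies the coalition axioms. As written, the proposal is an accurate diagnosis of the difficulty together with an unproved (and in one formulation false) claim that the difficulty can be removed, so it does not constitute a proof.
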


\begin{proof}
Let $\pi = \{ V_1, V_2,\dots, V_k\}$ be an RD-partition of cardinality $k=RC(G)$
and $\pi'$ be a dominating partition of cardinality $k' \ge k$
obtained from $\pi$. 

\smallskip

1. If $G=K_n$, then 
$RC(G)=C(G)=n$. Further, we assume that $G$ is not the complete graph.

\smallskip

2. If each member of $\pi$ is not a  dominating set, then $\pi' = \pi$. 

\smallskip

3. 
Let us show that if the partition $\pi$ contains 
two dominating sets that are not RD-sets, then $k=2$.
Let $V_1$ and $V_2$ be such sets.
Then  there is a vertex $v \not \in V_1$ for which $N(v) \subseteq V_1$ and 
a vertex $u \not \in V_2$ for which $N(u) \subseteq V_2$.
Since $V_1$ is a dominating set, $V_1 \cap N[u] \not = \emptyset$.
It is possible only if $u \in V _1$. By the similar reasoning, $v \in V _2$.
Therefore, $\pi = \{ V_1,V_2 \}$ and $V_1$ forms an RD-coalition with $V_2$.
Indeed, it is easy to see that the union of $X \in \pi \setminus \{V_1,V_2\}$
with any set of $\pi$ is not an RD-set.
Hence, $k=2$. Since $C(G) \ge 2$ for any graph, $RC(G) \le C(G)$.

\smallskip

4. Let the partition $\pi$ contains the unique dominating set  $V_1$ that is not an RD-set.
Then there is a vertex  $v\not \in V_1$ such that $N(v) \subseteq V_1$. 
Suppose that $V_1 \cup V_2$ is an RD-coalition.
It implies that $v\in V_2$, else $N(v) \subseteq   V_1 \cup V_2$
and, therefore, the union  $V_1 \cup V_2$ is not an  RD-coalition. 
Let $X, Y \in \pi \setminus \{V_1,V_2\}$.
Since $N(v) \subseteq V_1\cup X$, the union $V_1 \cup X$ is not an RD-set.
The vertex $v$ is not dominated by $X \cup Y$ for any $Y$.  
Therefore, $X$ must form an RD-coalition only with $V_2$.

\smallskip

4.1. Let the set $V_1$ has no vertex of degree $n-1$. 
Then $V_1$ can be presented as the union  of two disjoint subsets
$V_1=M \cup \{w_1,w_2,\dots,w_m\}$, $m\ge 1$,
where $M$ is a maximal non-dominating set.
Consider  partition
$\pi'=\{ \{w_1\},\{w_2\},\dots,\{w_m\},M,V_2,V_3,\dots,V_k\}$
of cardinality $k+m$.
By construction,  every member of $\pi'$ is a non-dominating set,
and $M \cup \{w_i\}$ is a dominating set for all $i =1,2,\dots,m$ as well as 
$V_2 \cup V_j$ for all $j=3,4,\dots,k$. Hence, $C(G) \ge RC(G)$.

\smallskip

4.2. 
Let the set $V_1$ contains vertices $u_1,u_2,\dots,u_s$ of degree $n-1$,
and the set $W=V_1\setminus \{u_1,u_2,\dots,u_s\}$ has $m$ vertices of degree less than $n-1$, 
where $s \ge 1$ and $m \ge 0$. 

If  $W$ is not a dominating set and  $W \cup V_2$ is a dominating set, then  
 $\pi' = \{ \{u_1\},\dots,\{u_s\},$ $W ,V_2,V_3\dots,V_k\}$ 
 is a desired partition of  cardinality  $k+s > k$.  Hence, $C(G) > RC(G)$.

If  $W \cup V_2$ is not a dominating set, then 
$W\cup V_2\cup V_j$ is a dominating set for all $j \ge 3$.
In this case, we have $\pi'=\{ \{u_1\},\{u_2\},\dots,\{u_s\}, W \cup V_2,V_3,\dots,V_k\}$
of cardinality $s+k-1 \ge k$. Consequently, $C(G) \ge RC(G)$.

If  $W$ is a dominating set, then 
consider partition $\{\{u_1\},\{u_2\},\dots,\{u_s\}, W ,V_2,V_3\dots,V_k\}$
of cardinality $k+s > k$. 
To obtain partition $\pi'$,  we further apply the above splitting procedure to decompose  $W$  
into a maximal non-dominating subset and several single-vertex sets in $\pi'$.
Therefore, $C(G) > RC(G)$.
\end{proof}

From the proof of Theorem~\ref{T1},  an RD-partition $\pi$
of a graph $G$ has at most two dominating sets that are not RD-sets.
If $\pi$ contains two such sets, then $RCG(G)$ is $K_2$.
An example of such a graph is given in 
Fig.~\ref{Fig1}
 (left graph).
The sets $V_1$ and $V_2$ of RD-partition consist of black and white vertices, respectively. 
If $\pi$ has one  dominating set that is not an RD-set, then  the coalition graph is a star  graph.
The right graph in 
Fig.~\ref{Fig1} 
has RD-partition $\pi=\{V_1, V_2,V_3,V_4\}$.
The set  $V_1$ is a dominating set that is not  an RD-set.
The restrained coalition graph is $S_4$.

\begin{figure}[h!]
\centering
\includegraphics[width=0.7\linewidth]{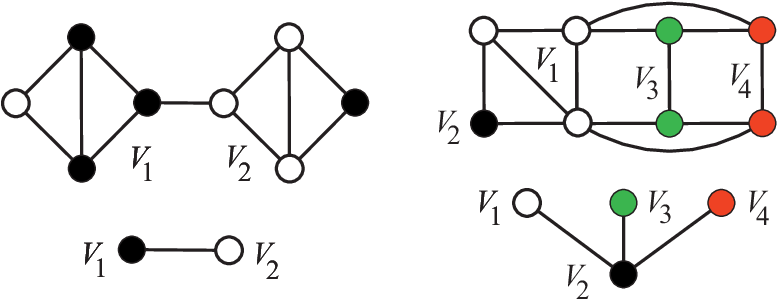}
\caption{Graphs $G$ having dominating sets that are  not RD-sets and their coalition graphs.}  
\label{Fig1}
\end{figure}

By Theorem~\ref{T1}, all upper bounds for $C(G)$ are also  
valid for $RC(G)$. 
It is known that $C(G) \le (\Delta(G)+3)^2/4$ for any graph $G$ \cite{Hayn2021a}.
There are graphs $G$ with $\Delta(G)=3$ (and  $\delta(G)=2$)
for which  $RC(G) = (\Delta+3)^2/4$. For example, a subcubic graph in 
Fig.~\ref{Fig2}
has $RC(G)=9$. Every black vertex of $G$ is a singleton set.

\begin{figure}[h!]
\centering
\includegraphics[width=0.75\linewidth]{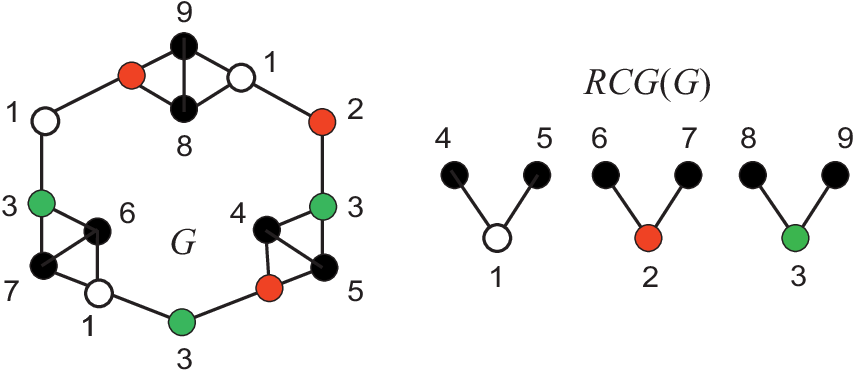}
\caption{Subcubic graph $G$ having $RC(G)=C(G)=9$.}  
\label{Fig2}
\end{figure}

What is the difference between the coalition number and the restrained coalition numbers?
In Table~\ref{Tab1}, the number of connected graphs of order up to 9 vertices with a difference between 
$C(G)$ and $RC(G)$ is displayed.

\begin{table}[h!] 
\centering
\caption{Difference $d = C(G) -RC(G)$ for graphs of order $n$.}  
\begin{tabular}{c|rrrrrrr|r}  \hline
$n$ $\backslash$ $d$ & 0 & 1 & 2  & 3   & 4  & 5  & 6  & graphs  \\    \hline
6  & 77 & 25  & 9  & 1 & .  & .  & .  & .  112 \\ 
7  & 580 & 226  & 43  & 3   & 1  &  .  &  . &  853   \\ 
8  & 8183 & 2399  & 511  & 21   & 2  & 1  & . & 11117   \\ 
9  & 209769 & 41717  & 9169  & 396   & 26  & 2  & 1 & 261080    \\  \hline
\end{tabular}
\label{Tab1}
\end{table}

\begin{proposition} \label{PropCRC}
For every $k\ge 0$, there exists a graph $G$ with $C(G)-RC(G)=k$.
\end{proposition}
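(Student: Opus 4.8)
The plan is to exhibit, for each $k \ge 0$, an explicit graph $G_k$ for which $C(G_k) = RC(G_k) + k$. The natural strategy is to find a gadget whose vertex set must be ``wasted'' in any RD-partition because restrained domination forces certain vertices to lie together, while a plain dominating coalition partition can split that gadget freely into many singletons. The simplest candidate is a star-like or pendant-heavy construction: attach $k$ pendant vertices to a central structure. For $C(G)$, each pendant vertex can be its own singleton set in a coalition partition, since a singleton $\{$leaf$\}$ together with the rest is dominating. But for an RD-partition, a leaf $\ell$ and its support vertex $s$ cannot both be outside an RD-set unless $\ell$ has another non-$R$ neighbor, which it does not; this constrains how RD-coalitions can form and limits $RC$.

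First I would pin down the base case: take a graph $G_0$ with $C(G_0) = RC(G_0)$ — for instance a cycle $C_n$ of suitable length, using the exact values of $RC(C_n)$ determined later in the paper, or simply $K_n$ where both equal $n$. Then, for the general case, I would take $G_k$ to be $G_0$ with $k$ additional pendant vertices attached to a single vertex $x$ of $G_0$ (or perhaps to $k$ distinct vertices, whichever makes the bookkeeping cleaner). The key computation has two halves. Lower bound on $C(G_k)$: exhibit a coalition partition of size $RC(G_0) + k$ by keeping (an adaptation of) the RD-partition of $G_0$ and placing each new leaf in its own singleton class; verify each leaf-singleton forms a dominating coalition with the class containing its support vertex, and that the old non-dominating classes still pair up. Upper bound on $RC(G_k)$: show every RD-partition of $G_k$ has at most $RC(G_0)$ parts, by arguing that the $k$ leaves cannot contribute new parts — each leaf, to participate in an RD-coalition, must be placed so that the resulting set still has no isolated vertex outside it, which collapses the would-be extra singleton classes back down.

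The hard part will be the upper bound $RC(G_k) \le RC(G_0)$: one must rule out \emph{all} RD-partitions, not just the obvious one, and show none can exploit the pendant vertices to gain extra classes. The danger is that a clever RD-partition puts each support vertex alongside its leaf and still manages many classes elsewhere. I expect one needs a structural lemma: in any RD-partition of $G_k$, each leaf $\ell_i$ lies in the same class as its support, OR $\ell_i$ lies in a class $V_j$ that is itself an RD-set (hence a singleton by definition of RD-partition) — but a singleton containing only a leaf is never an RD-set, contradiction; so the first alternative always holds, and the $k$ leaves merge into existing classes, capping the count. I would also double-check the edge cases $k=0,1$ and confirm connectivity and $n \ge 2$ so Theorem~\ref{T1} applies throughout (giving $RC \le C$ for free, so only the matching direction $C - RC \ge k$ needs the construction, with the reverse inequality $C - RC \le k$ from the two bounds above). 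A secondary obstacle is choosing $G_0$ so that attaching pendants does not accidentally increase $RC$ via the two-dominating-sets phenomenon flagged after Theorem~\ref{T1}; choosing $G_0$ with large $RC(G_0)$ and high connectivity around $x$ should avoid this.
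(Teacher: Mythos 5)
Your overall strategy (a pendant-heavy gadget that restrained domination forces to clump together while a plain coalition partition can shatter into singletons) is the same idea the paper exploits: its Figure~3 graph has one pendant vertex and many degree-2 vertices, and the proof shows $C(G)=n$ while $RC(G)=3$, so the single family realizes every difference $n-3$. However, your write-up has genuine gaps rather than a complete argument. First, the construction is never pinned down, and the concrete suggestion ``$K_n$ plus $k$ pendants at $x$'' fails your own lower bound on $C$: if each leaf $\ell_i$ is a singleton class, then a non-pendant singleton $\{y\}$ with $y\neq x$ has no coalition partner at all, since $\{y\}\cup\{z\}$ misses every leaf and $\{y\}\cup\{\ell_i\}$ misses $\ell_j$ for $j\neq i$; so that partition is not a coalition partition and $C(G_k)<n+k$ for $k\ge 2$. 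The paper's graph is engineered precisely so that every vertex can be a singleton, which is the nontrivial part of the lower bound you defer.

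Second, there is a logical slip in how you close the argument: your two bounds ($C(G_k)\ge RC(G_0)+k$ and $RC(G_k)\le RC(G_0)$) yield only $C-RC\ge k$, and Theorem~\ref{T1} gives $C-RC\ge 0$, not $C-RC\le k$. To get equality you also need an upper bound on $C(G_k)$ and a lower bound on $RC(G_k)$; the paper supplies both by computing the two parameters exactly. Third, your proposed structural lemma for the $RC$ upper bound is incorrect as stated: a leaf need not lie in the same class as its support. The correct fact (used implicitly in the paper and stated before Proposition~\ref{Tree1}) is that every leaf belongs to every RD-set, hence every RD-coalition must contain the class holding the leaves; combined with the degree-2 vertices this is what collapses $RC$ to $3$ in the paper's example. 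Even granting that all leaves merge into one class, your claim $RC(G_k)\le RC(G_0)$ does not follow and is not generally true (attaching pendants can change $RC$ in either direction, e.g.\ via the bound $RC\le\Delta+2$ of Proposition~\ref{CNdeg1}). As it stands the proposal is a plausible plan, but each of its three load-bearing steps is either unproven or fails on the suggested instantiation.
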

\begin{proof}

Consider graph $G$ of order $n$ shown in 
Fig.~\ref{Fig3}.
It is easy to verify that $C(G)=n$.
The sets $V_1, V_2, V_3$ consist of black vertices, 
one red vertex, and white vertices, respectively.
The sets $V_2$ and $V_3$ are not dominating,
and  $V_1$ is a dominating set that is not an RD-set.
The restrained coalition graph of $G$ is $S_3$. 
Now we prove that $RC(G)=3$.
Let $\pi=\{V_1,V_2,V_3,\dots\}$ be an RD-partition of  $G$. 
If vertices $u$ and $v$ belong to the same RD-coalition, 
then this coalition should also contain all vertices 
of degree 2 (see the right graph in 
Fig.~\ref{Fig3}). 
Then $\pi$ consists of at most three sets.
Suppose $u\in V_1$,  $v\in V_2$ and $V_1 \cup V_2$
is not an RD-coalition.
Let $V_3$ be a coalition partner for $V_1$. Since $w$ is a pendant vertex,
$w \in V_1 \cup V_3$.
If $w \in V_1$, then there is no coalition partner for $V_2$ in $\pi$.
Hence, $w\in V_3$.
Let $\pi$ includes a set $V_4$ which has only vertices of degree 2. 
We show that there is no RD-coalition for $V_4$.
Since every coalition must contain the pendant vertex $w$,
the set $V_4$ can  form an RD-coalition only with $V_3$.
If  the union of $V_4$ and $V_3$ does not contain a vertex $x$ of degree 2,
then $V_4 \cup V_3$ does not dominate  $x$. 
If all vertices of degree 2 are in $V_4 \cup V_3$, then 
all neighbors of vertex $u$ belong to the dominating set.
As a result we get  $C(G)-RC(G)=n-3$.
\end{proof}

\begin{figure}[h!]
\centering
\includegraphics[width=0.9\linewidth]{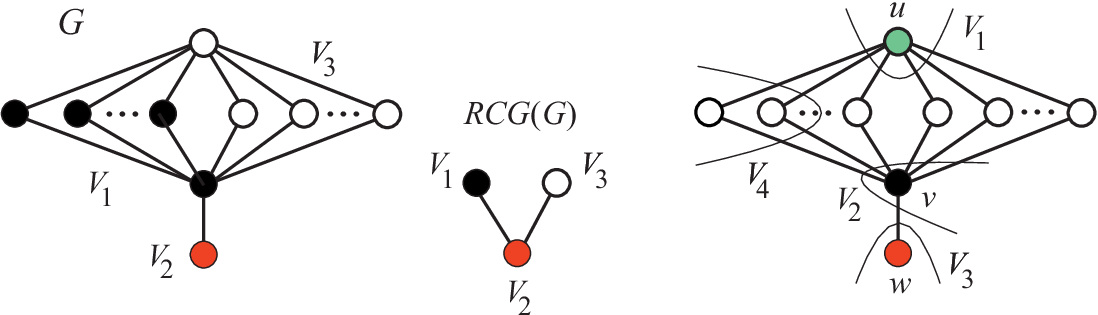}
\caption{Graph $G$ of order $n$ having $C(G)=n$ and $RC(G)=3$.}  
\label{Fig3}
\end{figure}

In the next result, we establish an upper bound on the restrained coalition number 
in terms of the restrained domination number ${\gamma}_r(G)$.

\begin{proposition} \label{gamma}
Let $G$ be a graph of order $n\ge 2$ and  $\gamma_r(G) \ge 2$. Then 
$$
RC(G) \le n -  \gamma_r(G)+2
$$
and the bound is sharp for complete bipartite graph $K_{r,s}$, $r\ge s\ge 2$.
\end{proposition}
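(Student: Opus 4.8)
The plan is to argue directly about the structure of an RD-partition $\pi = \{V_1, V_2, \dots, V_k\}$ of maximum cardinality $k = RC(G)$. First I would fix a minimum RD-set $R$ with $|R| = \gamma_r(G)$ and count how the vertices of $R$ can be distributed among the parts of $\pi$. The key observation is that if a part $V_i$ is entirely contained in $R$, then that part can have at most one vertex of $R$ used ``efficiently'' — but a cleaner route is to bound $k$ by showing that the parts $V_1, \dots, V_k$ cannot be too numerous because at least $\gamma_r(G) - 2$ of the vertices must be ``absorbed'' into a bounded number of parts. Concretely, I would try to show that all but at most two of the parts must miss some fixed set of $\gamma_r(G)$ vertices, forcing $k \le (n - \gamma_r(G)) + 2$.

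The approach I favor: pick any RD-set $R$ with $|R| = \gamma_r(G)$. I claim that for all but at most two indices $i$, the part $V_i$ must intersect the complement $V(G) \setminus R$. Indeed, if three distinct parts $V_a, V_b, V_c$ were all contained in $R$, consider that none of them is an RD-set (or at most one is a singleton RD-set), so each non-RD part among them needs an RD-coalition partner; but more to the point, $R = V_a \cup V_b \cup V_c \cup (\text{rest of } R)$, and one shows a contradiction with maximality or with the RD-coalition requirement. The cleanest version: if $V_i \subseteq R$ and $V_j \subseteq R$ with $i \ne j$, then $V_i \cup V_j \subseteq R$, and since $R$ is an RD-set while $V_i, V_j$ individually are not, $V_i$ and $V_j$ form an RD-coalition only if their union is an RD-set — this is consistent, so the real pigeonhole must come from counting: the $k$ parts partition $V(G)$, the set $R$ has $\gamma_r(G)$ vertices, so the number of parts meeting $V(G) \setminus R$ is at most $n - \gamma_r(G)$; hence the number of parts entirely inside $R$ is at least $k - (n - \gamma_r(G))$. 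I would then prove that at most two parts can lie entirely within a fixed minimum RD-set $R$, which gives $k - (n - \gamma_r(G)) \le 2$, i.e. $RC(G) \le n - \gamma_r(G) + 2$.

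The main obstacle is justifying ``at most two parts lie entirely inside $R$.'' Here I would argue: suppose $V_a, V_b, V_c \subseteq R$ are three distinct parts. Each is non-RD (the singleton-RD-set exception can occur at most once, since an RD-set inside $R$ of size $1$ forces $\gamma_r(G) = 1$, excluded). For $V_a$ to have an RD-coalition partner $V_x$, the union $V_a \cup V_x$ must be an RD-set; in particular it dominates and leaves no isolated vertex outside it. I would extract from the minimality of $R$ and the fact that $R \setminus V_a \supseteq V_b \cup V_c$ is itself close to an RD-set a contradiction — specifically, one typically shows $R \setminus V_a$ or a small modification already dominates with the restrained property, contradicting $|R| = \gamma_r(G)$ being minimum, or one shows that the coalition partners are forced to coincide in a way that collapses $k$. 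For the sharpness claim with $K_{r,s}$, $r \ge s \ge 2$: here $\gamma_r(K_{r,s}) = 2$ (take one vertex from each side when $r, s \ge 2$, which dominates and leaves both sides' remaining vertices with neighbors on the opposite side), so the bound reads $RC(K_{r,s}) \le n$, and one checks that the partition into singletons is an RD-partition — every singleton $\{u\}$ with $u$ on the large side pairs with a singleton on the small side to form an RD-set, and vice versa — giving $RC(K_{r,s}) = n = n - \gamma_r + 2$. I would verify the restrained condition explicitly for the singleton partition and confirm no singleton is itself an RD-set (which needs $r, s \ge 2$).
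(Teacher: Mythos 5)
Your overall strategy has a genuine gap at exactly the point you flag yourself: the claim that at most two parts of the partition can lie entirely inside a fixed minimum RD-set $R$. Nothing in your sketch actually establishes this. The suggestion that ``$R \setminus V_a$ or a small modification already dominates with the restrained property'' does not follow from the minimality of $R$ (removing a part from $R$ has no reason to preserve domination), and the alternative suggestion that the coalition partners are ``forced to coincide'' is not argued. It is not even clear that the claim is true: three pairwise disjoint parts could in principle sit inside a minimum RD-set of size $\gamma_r(G)\ge 3$ without contradicting the stated bound, since the bound only controls the total count $k$, not the distribution of parts relative to any particular $R$. As written, the proof does not go through.

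The statement has a much shorter proof that avoids choosing a minimum RD-set altogether, and this is the route the paper takes. Since $\gamma_r(G)\ge 2$, no singleton is an RD-set, so by the definition of an RD-partition no part of $\pi$ is an RD-set; hence every part, in particular $V_1$, belongs to some RD-coalition, say with $V_2$. Then $V_1\cup V_2$ is an RD-set, so $|V_1|+|V_2|\ge \gamma_r(G)$, while each of the remaining $k-2$ parts contains at least one vertex. Summing,
$$
n \;=\; \sum_{i=1}^{k}|V_i| \;\ge\; \gamma_r(G)+(k-2),
$$
which is the bound. Your verification of sharpness for $K_{r,s}$ with $r\ge s\ge 2$ is essentially correct: $\gamma_r(K_{r,s})=2$, no singleton is an RD-set, and any two singletons from opposite sides form an RD-coalition, so the singleton partition gives $RC(K_{r,s})=n=n-\gamma_r+2$. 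I recommend replacing the pigeonhole argument over a minimum RD-set with the direct counting argument above.
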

\begin{proof}
Let  $\pi(G)=\{ V_1,V_2,\dots,V_k \}$ be an RD-partition and $k=RC(G)$. 
Since $\gamma_r(G)\ge 2$, the partition does not contain a singleton RD-set.  
Assume that $V_1$ and $V_2$ form an RD-coalition.
Then $V_1\cup V_2$ is an RD-set of $G$, and thus
 $\gamma_r(G)\leq \left \vert V_1 \right \vert +\left \vert V_2 \right \vert $.
Accordingly,
$$
n   = \left\vert V_1 \right\vert + \left\vert V_2 \right \vert  + \left \vert V_3 \right \vert + \cdots + \left \vert V_k \right \vert  
\geq \left \vert V_1 \right \vert  + \left \vert V_2 \right \vert + (k-2) \geq \gamma_r(G)+(k-2)
$$
leading to the desired upper bound.
\end{proof}

The similar bound is also valid for the dominating coalition number 
based on different types of domination in graphs.

\section{Restrained coalition partitions of trees}

It is known the following upper bound for graphs with pendant vertices.

\begin{proposition} {\rm \cite{Nes2025}} \label{CNdeg1}
Let $G$ be a graph with $\delta(G) =1$. Then $RC(G) \le \Delta(G) + 2$.
\end{proposition}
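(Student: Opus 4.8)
The plan is to bound the number of parts in an RD-partition by showing that every non-RD-set in such a partition must "see" a fixed pendant vertex through its neighbourhood, which forces most parts to cluster around a single vertex of degree at most $\Delta(G)$. Concretely, let $w$ be a pendant vertex with unique neighbour $u$, so $\deg(u)\le\Delta(G)$. Let $\pi=\{V_1,\dots,V_k\}$ be an RD-partition with $k=RC(G)$, and assume first that $G$ is not complete (so we may assume $\pi$ has at least one non-RD-set). The key observation is: if $X,Y\in\pi$ form an RD-coalition, then $X\cup Y$ is an RD-set, hence $w\in X\cup Y$ (otherwise $w\notin X\cup Y$ but then $w$'s only neighbour $u$ had better lie outside $X\cup Y$ too for $w$ to be non-isolated in the complement, yet $u\in N(w)$ must then be dominated by $X\cup Y$, a contradiction when $N(w)=\{u\}$). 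A cleaner phrasing: any RD-set $R$ with $w\notin R$ must satisfy $u\notin R$ as well, because $w$ needs a neighbour outside $R$ and its only neighbour is $u$; but then no vertex of $R$ dominates $w$, contradicting that $R$ is dominating. Hence \emph{every} RD-set of $G$ contains $w$.

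The next step is to exploit this. Let $W$ be the (unique) part of $\pi$ containing $w$. Every RD-coalition in $\pi$ must involve $W$ — if $X,Y$ are both distinct from $W$, their union misses $w$ and so cannot be an RD-set. Therefore every non-RD-set among $V_1,\dots,V_k$ forms an RD-coalition only with $W$, i.e. the restrained coalition graph $RCG(G,\pi)$ is a star centred at $W$ (together possibly with isolated singleton RD-parts, but by the convention in the excerpt singleton RD-sets are allowed as parts; still, if $\gamma_r(G)\ge 2$ there are none, and in general a singleton RD-part is itself an RD-set, hence contains $w$, forcing it to be $\{w\}=W$). So apart from $W$ itself, all other parts $V_i$ are non-RD-sets, each satisfying that $V_i\cup W$ is an RD-set. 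In particular $V_i\cup W\supseteq N[w]\ni u$, and more importantly $V_i\cup W$ must dominate $u$'s pendant-free structure; the crucial squeeze is that each such $V_i$ must contain a neighbour of $w$ or... no — rather, the sharper point is that $V_i\cup W$ being an RD-set and $w$ already in $W$ means the extra domination burden carried by $V_i$ concerns vertices that $W$ alone fails to dominate, and simultaneously $W$ alone fails to be restrained. I would instead count directly: I claim each of the $k-1$ parts other than $W$ must contain a vertex adjacent to $u$, or be "used up" in a way bounded by $\deg(u)$.

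Here is the counting step I would actually run. Since $W\cup V_i$ is an RD-set for every $i\ne 1$ (renaming $W=V_1$), and since $W$ alone is not an RD-set, either $W$ fails to dominate or $W$ leaves an isolated vertex in its complement. If $W$ is not a dominating set, there is a vertex $z\notin W$ with $N[z]\cap W=\emptyset$; for $W\cup V_i$ to dominate $z$ we need $V_i\cap N[z]\ne\emptyset$. If instead $W$ is dominating but not restrained, there is $z\notin W$ with $N(z)\subseteq W$; for $W\cup V_i$ to be restrained, $z$ must have a neighbour outside $W\cup V_i$ or $z\in V_i$, and since $N(z)\subseteq W$ the only option is $z\in V_i$. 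Either way there is a fixed vertex $z$ (depending only on $W$) such that each $V_i$, $i\ge 2$, meets the finite set $N[z]$. If $z$ can be taken with $z=u$ or $z=w$-adjacent, we get $k-1\le |N[z]|\le\Delta(G)+1$, giving $k\le\Delta(G)+2$. The main obstacle — and the step I would spend the most care on — is showing this witness vertex $z$ can always be chosen so that $|N[z]|\le\Delta(G)+1$ with the right constant, i.e. ruling out that $z$ itself has degree $\Delta(G)$ \emph{and} that all $\Delta(G)+1$ vertices of $N[z]$ genuinely appear in distinct parts; one resolves this by noting $z\notin W$ and $w\in W$, so $w\notin N[z]$ unless $z=u$, and handling the case $z=u$ separately using that $u$ already lies in every RD-set's "reach". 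I expect the clean bound $k\le\Delta(G)+2$ to drop out after this case analysis, with the "$+2$" accounting for the part $W$ plus the at most one part containing $z$ that need not also be forced elsewhere.
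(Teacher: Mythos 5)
Your argument is correct and complete; note that the paper itself does not prove this proposition but only cites it from \cite{Nes2025}, so there is no in-paper proof to compare against. The chain you build is exactly the natural one: every RD-set must contain the pendant vertex $w$ (else its unique neighbour $u$ would have to lie outside the set for the restrained condition, and then $w$ is undominated), hence every RD-coalition involves the part $W$ containing $w$; $W$ is not itself an RD-set (a singleton RD-part $\{w\}$ would force $n\le 2$), so either $W$ fails to dominate some $z$, in which case each of the other $k-1$ parts must meet $N[z]$ and disjointness gives $k-1\le |N[z]|\le \Delta(G)+1$, or $W$ dominates but strands some $z$ with $N(z)\subseteq W$, in which case each other part must contain $z$ and so $k\le 2$. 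The only criticism is that the ``main obstacle'' you flag at the end is not an obstacle at all: you do not need to rule out $\deg(z)=\Delta(G)$ or worry about whether $w\in N[z]$, because even in the worst case the count $k-1\le|N[z]|\le\Delta(G)+1$ already yields $k\le\Delta(G)+2$, which is precisely the claimed bound. You could delete that entire closing discussion and the proof stands.
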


The set of restrained coalition graphs of paths contains only two graphs.

\begin{proposition} {\rm \cite{Nes2025}} \label{CNpath}
Path $P_n$ of order $n$ has coalition graphs  $S_2$ 
for $2 \le n \le 5$ and $S_3$ for $n\ge 6$.
\end{proposition}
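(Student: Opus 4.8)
\textbf{Proof plan for Proposition~\ref{CNpath}.}
The plan is to work directly with the structure of RD-partitions of $P_n$ and to show two things: no RD-partition can have more than three parts, and an RD-partition with exactly three parts exists precisely when $n \ge 6$. Throughout I would label the vertices $v_1,\dots,v_n$ along the path and repeatedly use the elementary fact that a set $R$ is an RD-set of $P_n$ iff every vertex outside $R$ has a neighbor outside $R$ (so the complement $V\setminus R$ induces a subgraph of $P_n$ with no isolated vertex) and $R$ dominates $P_n$. In particular, $V\setminus R$ must be a disjoint union of subpaths each of length at least $2$, and the positions of these subpaths are constrained by the domination requirement; a short case check gives $\gamma_r(P_n)$ and, more importantly, a description of which pairs $(V_i,V_j)$ can form an RD-coalition.

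First I would dispose of the small cases $2 \le n \le 5$ by hand: in each of these, no $3$-part RD-partition exists, because any attempt to split $V$ into three nonempty parts leaves at least one part whose union with either of the others fails to be an RD-set (the complement of the union either misses domination of an endpoint or leaves an isolated vertex in the induced subgraph on the complement). Since $P_n$ is not complete and $\gamma_r(P_n) \ge 2$ for $n\ge 2$, no part may be a singleton RD-set, so every part must have an RD-coalition partner, forcing the coalition graph to have no isolated vertex; combined with $RC(P_n)\ge 2$ (the trivial bipartition into two non-RD-sets whose union is all of $V$, an RD-set) this yields the coalition graph $S_2$ for $2\le n\le 5$. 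The bookkeeping here is finite and routine.

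Next, for $n \ge 6$ I would exhibit an explicit $3$-part RD-partition: take, say, $V_1 = \{v_1,v_2\}$, $V_2 = \{v_{n-1},v_n\}$, and $V_3$ the remaining middle vertices, and check that $V_1\cup V_3$ and $V_2\cup V_3$ are RD-sets while $V_1\cup V_2$ is not (its complement $\{v_3,\dots,v_{n-2}\}$ is fine as an induced subgraph but one must verify domination, and for $n\ge 6$ this fails, so $V_1,V_2$ are \emph{not} a coalition) --- actually I would choose the three parts so that exactly two of the three pairs form coalitions, giving coalition graph $S_3=P_3$; a clean choice is $V_1=\{v_1,v_2\}$, $V_3=\{v_3,\dots,v_{n-1}\}$, $V_2=\{v_n\}$ is not allowed since $V_2$ would be a non-RD singleton with partner only $V_1\cup V_3$-type unions, so instead partition the interior carefully. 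The real content is the upper bound $RC(P_n)\le 3$: I would argue that in any RD-partition, once one identifies a part $V_i$ with an RD-coalition partner, the union $V_i\cup V_j$ being an RD-set forces its complement to be a union of "long" subpaths, and there is simply not enough room on a path to have four pairwise-distinct nonempty parts each participating in such a coalition; more precisely, every RD-coalition $V_i\cup V_j$ must contain both endpoints $v_1$ and $v_n$ together with at least one of $v_2$, and of $v_{n-1}$ appropriately, which already pins down where the "small" parts can live and limits their number to at most two besides a large leftover part.

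The main obstacle I anticipate is the upper bound argument: showing cleanly that a path cannot support four parts each with a coalition partner. The danger is a proliferation of cases depending on where $v_1,v_2,v_{n-1},v_n$ fall among the parts. I would tame this by first proving a lemma that in any RD-partition of $P_n$ (with $n\ge 6$) the two endpoints $v_1$ and $v_n$, together with $v_2$ and $v_{n-1}$, lie in at most two of the parts, since any RD-coalition must contain $\{v_1\}$ or dominate $v_1$ via $v_2\in R$ and simultaneously leave $v_1$ non-isolated in the complement — a tension that, once unpacked, shows these four "boundary" vertices cluster into two parts, and then every other part, consisting only of interior vertices, can form a coalition only with one of those two, capping the total at three. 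Note this also shows Proposition~\ref{CNdeg1} is not tight for paths (where $\Delta=2$), consistent with the stated answer.
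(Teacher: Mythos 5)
The paper itself offers no proof of Proposition~\ref{CNpath}; it is imported from \cite{Nes2025}, so your attempt has to stand on its own. Your skeleton is the right one --- every RD-set of $P_n$ contains both pendant vertices, so in any RD-partition with $k\ge 3$ parts both endpoints lie in a single part $V_1$ that must belong to every RD-coalition, whence the coalition graph is a star --- but both halves of the $n\ge 6$ case have genuine gaps. The construction $V_1=\{v_1,v_2\}$, $V_2=\{v_{n-1},v_n\}$, $V_3$ the middle fails for a more basic reason than the domination issue you flag: since $v_n\in V_2$, the set $V_1\cup V_3$ cannot be an RD-set at all, so the only candidate coalition is $V_1\cup V_2$ and $V_3$ is left without a partner. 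You then visibly abandon the construction (``partition the interior carefully'') without supplying one. A working choice is $V_2=\{v_2,v_3\}$, $V_3=\{v_4,v_5\}$ and $V_1=V\setminus(V_2\cup V_3)$: for $n\ge 6$ none of the three parts is an RD-set, $V_1\cup V_2$ and $V_1\cup V_3$ are RD-sets, and $V_2\cup V_3$ is not (it isolates $v_1$ in its complement), giving coalition graph $S_3$.

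The upper bound $RC(P_n)\le 3$ is also not established. Your proposed key lemma --- that $v_1,v_2,v_{n-1},v_n$ lie in at most two parts --- is false: in the legitimate RD-partition $\{v_1,v_6\},\{v_2,v_3\},\{v_4,v_5\}$ of $P_6$ these four vertices occupy three parts. Moreover, even the correct star observation does not ``cap the total at three'': a star may have arbitrarily many leaves, so you must separately exclude the existence of three parts $V_2,V_3,V_4$ each coalescing with $V_1$, and ``not enough room on a path'' is not an argument, since the parts could be very small. The missing ingredient is a quantitative use of $\Delta(P_n)=2$: a vertex $u\in V_2$ needs a neighbor in $V_1\cup V_3$ and in $V_1\cup V_4$ (domination by those coalitions) and a neighbor outside each of them (the restrained condition on their complements); having only two neighbors, $u$ must have either one neighbor in $V_1$ and one in $V_2$, or one in $V_3$ and one in $V_4$. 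Scanning from the leftmost vertex outside $V_1$ (whose left neighbor lies in $V_1$) then shows that $V\setminus V_1$ decomposes into adjacent pairs, each contained in one part and flanked by vertices of $V_1$, so $V_1$ would itself be a non-singleton RD-set --- impossible in an RD-partition. Some argument of this kind is the actual content of the bound, and your sketch does not contain it.
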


It is easy to see that all pendant vertices of a tree $T$ must belong to every RD-set.
This immediately implies that if $RC(T) \ge 3$, then all pendant vertices belong to one set of an RD-partition,
and every RD-coalition includes this set.   Therefore, $RCG(T)$ of a tree $T$ is always a star graph.

\begin{proposition} \label{Tree1}
For every  $n\ge 13$ and every $\Delta \ge 3$, there exist trees $T$  of order $n$ with the maximal degree 
$\Delta(T)=\Delta$  such that the restrained coalition graph of $T$ is $S_{\Delta + 2}$. 
\end{proposition}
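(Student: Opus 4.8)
The plan is to construct, for each $n \ge 13$ and each $\Delta \ge 3$, an explicit tree $T_{n,\Delta}$ together with an RD-partition $\pi$ into $\Delta+2$ sets whose restrained coalition graph is $S_{\Delta+2}$, and then argue that $\Delta+2$ is best possible here by invoking Proposition~\ref{CNdeg1}. The natural building block is a "spider-like" tree: take a central vertex $c$ of degree $\Delta$, and attach to it $\Delta$ legs. To make the maximum degree exactly $\Delta$ and to have enough room, some legs will be short paths (so that $c$ genuinely has degree $\Delta$ and nothing else exceeds it) and the remaining vertices (up to the budget $n$) will be absorbed into lengthening one or two legs into paths of the appropriate length. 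By Proposition~\ref{CNpath} the path alone already gives $S_3$, so the issue is to squeeze out $\Delta-1$ further coalition partners for the distinguished set containing all the leaves.

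Concretely, I would let $L$ be the set of all pendant vertices of $T$; as noted in the paragraph before the proposition, $L$ must lie inside every RD-set, and if $RC(T)\ge 3$ then $L$ is contained in one block $V_1$ of any RD-partition and every RD-coalition contains $V_1$, so $RCG(T)$ is a star centred at $V_1$. Thus it suffices to exhibit a partition $\pi=\{V_1,V_2,\dots,V_{\Delta+2}\}$ in which $V_1\supseteq L$ is a non-RD-set, each $V_j$ ($j\ge 2$) is a non-RD-set, and $V_1\cup V_j$ is an RD-set for every $j\ge 2$. I would choose $V_1$ to be just barely non-dominating / non-restrained — e.g. $L$ together with a few internal vertices so that exactly one "defect" remains (a single vertex either undominated or isolated in the complement) — and then design $V_2,\dots,V_{\Delta+2}$ so that each contains a vertex that repairs that unique defect while none of them is itself an RD-set and no $V_i\cup V_j$ ($i,j\ge 2$) is an RD-set (this last point forces $RCG(T)$ to be exactly the star and not something larger). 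The vertices of the long leg(s) give us the slack to make $n$ arbitrary while keeping $\Delta$ fixed; the short legs (one per "extra" coalition partner) are what generate the $\Delta-1$ additional neighbours of $V_1$ beyond the two coming from the path structure. The bound $n\ge 13$ is exactly the threshold at which there is enough material to fit $c$, the $\Delta$ legs in their minimal lengths, and the leaf block with its defect simultaneously.

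The main obstacle will be the bookkeeping that simultaneously enforces three competing constraints: (i) $V_1\cup V_j$ is restrained-dominating for every $j\ge 2$ — this needs each $V_j$ to cover the unique defect of $V_1$ and to leave no isolated vertex in the complement, which is delicate because removing different vertices into different $V_j$'s changes the complement each time; (ii) no $V_j$ is itself an RD-set — easy to violate if a leg is too short; and (iii) the coalition graph is exactly $S_{\Delta+2}$, i.e. no two of $V_2,\dots,V_{\Delta+2}$ form a coalition, which requires that their union still misses a pendant neighbour or creates an isolated vertex. I expect (i) and (iii) together to be the crux: the construction must be arranged so that every pendant vertex of $T$ lies in $V_1$ (forcing every coalition to include $V_1$, giving (iii) for free as soon as $RC\ge3$) while still keeping each $V_j$ non-restrained. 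Once the tree and partition are written down explicitly this becomes a finite check per leg, and the $n\ge 13$ and $\Delta\ge 3$ ranges should follow by padding one leg with a path of length $n - n_0(\Delta)$.

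Finally, to see the value $\Delta+2$ is tight and not merely achieved, note Proposition~\ref{CNdeg1} gives $RC(T)\le\Delta(T)+2$ for any tree (since $\delta(T)=1$), so the partition above is optimal and $RC(T)=\Delta+2$, with $RCG(T)=S_{\Delta+2}$ as claimed.
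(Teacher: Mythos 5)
Your overall strategy matches the paper's: build a spider-like tree with an explicit RD-partition into $\Delta+2$ blocks, put all pendant vertices into one block $V_1$, use the fact that every RD-set of a tree must contain every pendant vertex to force the coalition graph to be a star centred at $V_1$, and absorb the surplus vertices into a lengthened path to reach every $n\ge 13$. The reduction to ``exhibit $\pi$ with $V_1\supseteq L$ such that $V_1\cup V_j$ is an RD-set for each $j\ge 2$ while no block alone is an RD-set'' is correct, and your observation that edges among $V_2,\dots,V_{\Delta+2}$ are excluded for free (since their unions contain no leaf) is right.

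The genuine gap is that the construction is never actually produced: you describe what you \emph{would} choose for $V_1$ and for $V_2,\dots,V_{\Delta+2}$, but no explicit tree, no explicit blocks, and no verification of your constraints (i) and (ii) ever appear. This is not a routine omission, because your own setup shows the constraints are rigid. If $V_1$ fails to dominate a single vertex $x$, then each of the $\Delta+1$ pairwise disjoint blocks $V_2,\dots,V_{\Delta+2}$ must meet $N[x]$, which forces $\deg(x)\ge\Delta$; so $x$ must be the centre $c$ itself and the blocks must distribute themselves one vertex each over $N[c]$. If instead the defect of $V_1$ is of restrained type (a vertex of $V\setminus V_1$ isolated in $V\setminus V_1$), an analogous count forces equally specific structure. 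On top of that, each union $V_1\cup V_j$ must separately leave no isolated vertex in its own complement, and that complement changes with $j$. Whether all of this can be arranged simultaneously on a tree of maximum degree exactly $\Delta$, for every $n\ge 13$, is precisely the content of the proposition, and it is settled only by writing down the tree and the partition and checking them --- which is what the paper does by exhibiting the configuration of Fig.~\ref{Fig4} and padding one path. Until that finite check is carried out, what you have is a plan for a proof rather than a proof.
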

\begin{proof}

Consider a tree $T$ with $\Delta(T) \ge 3$ and its RD-partition shown in 
Fig.~\ref{Fig4}. 
By increasing the order of the bottom  path with white vertices, we have 
a tree of an arbitrary order $n\ge 13$ with given $\Delta$.
The corresponding coalition graph is the star graph $S_{\Delta + 2}$. 
\end{proof}

\begin{figure}[h!]
\centering
\includegraphics[width=0.75\linewidth]{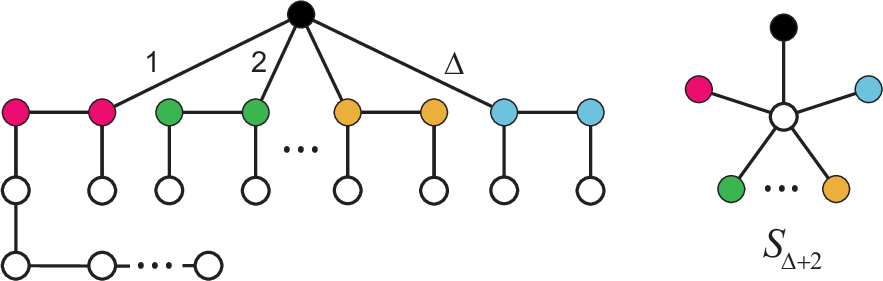}
\caption{A tree and its coalition graph.}  
\label{Fig4}
\end{figure}

\newpage

The numbers of trees with given restrained coalition number are presented in 
Table~\ref{Tab2}.

\begin{table}[h!] 
\centering
\caption{Number of trees of order $n$ with given $RC(T)$.} 
\begin{tabular}{c|rrrrrrrrrr}  \hline
$RC(T)$ $\setminus$ $n$ & 4  & 5   & 6   & 7    & 8     &  9   & 10  &  11  & 12  & 13 \\  \hline
2                     & 2  & 3   & 5   &  8   & 13    & 20   & 34  &  54  & 95  &  160    \\
3                     &  . &  .  &  1  &  3   &  10   &  26  &  67 &  155 & 358 &  792 \\
4                     &  . &  .  &  .  &  .   &  .    &  1   &  5  &  26  & 98  &  348 \\
5                     &  . &  .  &  .  &  .   &  .    &  .   &  .  &   .  & .   &   1 \\  \hline
total                     &  2 &  3  &  6  &  11  &  23   &  47  & 106 &  235 & 551   & 1301 \\
\hline
\end{tabular}
\label{Tab2}
\end{table}

\begin{proposition} \label{PropCRC-2}
For every $k\ge 0$, there exists a tree $T$ with $C(T)-RC(T)=k$.
\end{proposition}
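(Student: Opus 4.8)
The plan is to construct, for each $k\ge 0$, a tree $T_k$ whose ordinary coalition number is large (growing with $k$) while its restrained coalition number stays bounded by a small constant, so that the difference can be tuned to hit any prescribed value $k$. The natural starting point is the family already built in Proposition~\ref{PropCRC}: there we have graphs $G$ of order $n$ with $C(G)=n$ and $RC(G)=3$, hence $C(G)-RC(G)=n-3$. If that construction could be realized by a tree, we would be essentially done. Failing that, the idea is to mimic its mechanism in a tree: attach a pendant vertex $w$ (forcing every RD-coalition to contain $w$, by the remark preceding Proposition~\ref{Tree1}, so $RCG(T)$ is a star and $RC(T)$ equals one plus the number of coalition partners of the ``pendant'' set) together with a long path of degree-$2$ vertices that every RD-coalition is forced to swallow whole, which caps $RC(T)$ at a small constant. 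Simultaneously one wants $C(T)$ to be as large as possible; for trees one can use a caterpillar or spider-like tree for which $C(T)$ is known (or easily shown) to grow linearly in the order $n$, using the singleton sets at high-degree vertices.

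The key steps, in order, are: (i) describe the tree $T_k$ explicitly — a ``core'' gadget that forces the restrained behaviour (a pendant vertex plus a sufficiently long induced path of degree-$2$ vertices, plus whatever is needed so that $RC(T_k)$ equals a fixed small value such as $3$), glued to a ``domination-rich'' part whose size $m=m(k)$ is a free parameter and which contributes $m$ extra singleton classes to an ordinary coalition partition; (ii) prove the lower bound $C(T_k)\ge m + c$ by exhibiting the coalition partition with all the intended singletons, checking that each non-dominating class has a dominating partner; (iii) prove the matching upper bound, or at least $C(T_k)=m+c'$ for a controlled constant, e.g.\ via $C(T)\le n$ or via Proposition~\ref{CNdeg1} ($RC(G)\le\Delta(G)+2$) combined with an analogous bound for $C$; (iv) prove $RC(T_k)=c$ by the forcing argument — every RD-set must contain all pendant vertices, so in any RD-partition with at least three sets they all lie in one set $V_1$, every RD-coalition contains $V_1$, and the long degree-$2$ path forces any coalition partner of $V_1$ to contain the entire path, leaving room for only boundedly many classes; (v) conclude $C(T_k)-RC(T_k)=k$ by choosing the path length and the size of the domination-rich part appropriately, and handle the small cases $k=0$ (e.g.\ $P_6$, where $C=RC=3$) and $k=1,2$ separately if the uniform construction degrades there.

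The main obstacle I expect is step (iii)/(iv) in tension with each other: making $C(T_k)$ genuinely large while keeping the tree structured enough that $RC(T_k)$ can be pinned down exactly. In a general graph one has great freedom to add edges that create many dominating pairs without creating restrained dominating pairs; in a tree, edges are scarce, so engineering a large gap requires the ``domination-rich'' subtree to simultaneously (a) admit a coalition partition into $\Theta(m)$ classes and (b) not secretly enable extra RD-coalitions. The cleanest route is probably to let that subtree be a union of many short pendant paths or stars hanging off a common high-degree vertex, using the fact (implicit in Table~\ref{Tab2} and Proposition~\ref{Tree1}) that $C(T)$ for caterpillars can reach $\Delta+2$ or more, and to iterate this gadget so the order — and hence $C(T)$ — grows while a single pendant vertex plus a single long degree-$2$ path keeps $RC(T)$ equal to $3$. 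Verifying that the long path really does force every RD-coalition partner of $V_1$ to absorb the whole path (so that no fourth class can find a partner) is the delicate combinatorial core, and it parallels exactly the argument given for the right-hand graph in Fig.~\ref{Fig3}; I would model the write-up on that argument.
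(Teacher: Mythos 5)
Your outline points in the right direction, but as written it is a plan rather than a proof: the tree $T_k$ is never actually constructed, and neither of the two quantities $C(T_k)$, $RC(T_k)$ is computed, so the proposition is not yet established. Two of the concrete anchors you do give are also wrong. First, $C(P_6)=5$ and $RC(P_6)=3$, so $P_6$ realizes $k=2$, not $k=0$; the cases $k=0,1$ need $P_2$ and $P_3$ (with $C-RC$ equal to $2-2=0$ and $3-2=1$ by Proposition~\ref{CNpath}). Second, for trees one has $C(T)\le\Delta(T)+2$ (this is the content of \cite{Bakh2023}), so a caterpillar cannot reach ``$\Delta+2$ or more'', and more importantly your ``domination-rich part contributing $m$ extra singleton classes'' cannot inflate $C$ while keeping $\Delta$ fixed: the only way to make $C(T)$ grow is to grow $\Delta(T)$. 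This collapses your architecture of a fixed restraining gadget glued to an independently scalable domination-rich subtree.

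The tension you flag between steps (iii) and (iv) is real, and the paper resolves it with a much simpler object than you anticipate: a spider with center $v$ of degree $\Delta\ge 3$ and $\Delta$ legs of length two (Fig.~\ref{Fig5}). There $C(T)=\Delta+2$, witnessed by taking $v$ and each degree-$2$ vertex as singletons and all pendants as one class, with the matching upper bound coming from the closed neighborhoods of two pendant leaves. For the restrained side no long degree-$2$ path is needed: every RD-set must contain all pendant vertices, and once the union $V_1\cup V_2$ of an RD-coalition contains all pendants and the center $v$, any degree-$2$ vertex left outside has both of its neighbors inside, violating the restrained condition; one checks that the class containing $v$ can therefore only partner with a class whose union with it is all of $V(T)$, forcing $RC(T)=2$ (not $3$). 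The difference is then exactly $\Delta(T)$, which covers every $k\ge 3$. If you want to salvage your write-up, replace the vague gadget by this spider, prove both equalities $C(T)=\Delta+2$ and $RC(T)=2$ explicitly, and fix the small cases as above.
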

\begin{proof}
For $k=0,1,2$, we have $C(P_2)-RC(P_2)=2-2=0$, 
$C(P_3)-RC(P_3)=3-2=1$, and $C(P_6)-RC(P_6)=5-3=2$
\cite{Hay2020,Nes2025}.

Consider tree $T$ of order $n \ge 7$ having 
$\Delta(T)= \deg(v) \ge 3$ depicted in 
Fig.~\ref{Fig5}.
We show that $C(T)=\Delta(T)+2$.   
Suppose that $C(T) > \Delta(T)+2 \ge 5$. Then vertices of degree 1 and 2 are contained in at 
least $\Delta(T)+2$ different sets of the coalition partition.
 Thus, there exist two pendant vertices $x$ and $y$ such that the vertices of 
 $N[x] \cup N[y]$ are contained in four different sets of the coalition partition. 
 Since every coalition must contain a vertex of  $N[x]$ and a vertex of $N[y]$, we have $C(T) \le 4$, a contradiction.
 The following vertex partition of  $T$ has cardinality $\Delta(T)+2$:   vertex $v$ and every vertex of degree 2
are singleton sets, and all pendant vertices form one set.

Assume that $V_1, V_2$ are sets of an RD-partition $\pi$ of $T$ such that 
$v\in V_1$ and $V_1 \cup V_2$ form an RD-coalition.
Since the set of all pendant vertices $X_e$ of a graph must belong to every RD-coalition,
$X_e \subset V_1 \cup V_2$. 
If there exists $V_3 \in \pi$ and $V_3$ contains a vertex of degree 2,
then $V_1 \cup V_2$  is not an RD-coalition (all neighbors of this vertex will be in the dominating set).
Hence, the set $V_1 \cup V_2$ includes all vertices of $T$, and we get $RC(T)=2$.
Then $C(T)-RC(T)=\Delta(T)\ge 3 $.
\end{proof}

\begin{figure}[h!]
\centering
\includegraphics[width=0.2\linewidth]{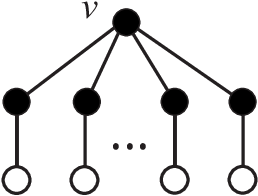}
\caption{Tree $T$ having $C(T)-RC(T)=\Delta(T)$.}  
\label{Fig5}
\end{figure}

Table~\ref{Tab3} shows  the numbers of trees $T$ of order up to 13 with the given difference 
of the  coalition numbers.

\begin{table}[h!] 
\centering
\caption{Difference $d = C(T) -RC(T)$ for trees of order $n$.} 
\begin{tabular}{c|rrrrrrrrrrrr}  \hline
$d$ $\backslash$ $n$ & 3 & 4  & 5 & 6  & 7  & 8  & 9  & 10 & 11  & 12  & 13  \\    \hline
0                      & 0 & 0  & 0 & 0  & 0  & 0  & 1  & 3  & 13  & 36  &  105 &   \\ 
1                      & 1 & 1  & 1 & 1  & 2  & 5  & 11 & 26 & 62  & 160 &  421 &   \\ 
2                      & . & 1  & 2 & 5  & 8  & 17 & 32 & 70 & 143 & 305 &  635 &   \\ 
3                      & . & .  & . & .  & 1  & 1  & 2  & 6  & 14  & 45  &  126 &   \\ 
4                      & . & .  & . & .  & .  & .  & 1  & 1  & 2   & 4   &   11 &   \\ 
5                      & . & .  & . & .  & .  & .  & .  & .  & 1   & 1   &   2  &   \\ 
6                      & . & .  & . & .  & .  & .  & .  & .  & .   & .   &  1  &   \\  \hline
trees                  & 1 & 2  & 3 & 6  & 11 & 23 & 47 & 106& 235 & 551 & 1301 &   \\  \hline
\end{tabular}
\label{Tab3}
\end{table}

\section{Restrained coalition number of cycles}

It is known that for  the dominating coalition number of cycles $C(C_n) \le 6$ \cite{Hay2020}.  
By Theorem~\ref{T1}, we have $RC(C_n) \le 6$. 
Restrained coalition numbers of cycles were previously reported in \cite{Nes2025}.
Here we formulate a correct version of their result.

\begin{proposition} \label{HE}
For the restrained domination coalition number of cycle $C_n$,
$$
RC(C_n)= \left\{
\begin{array}{ll}
3, & {\rm if \ } n=3,5 \\
4, & {\rm if \ } n=4,8 \\
5, & {\rm if \ } n=7,11 \\
6, & {\rm otherwise.} \\
\end{array}
\right.
$$
\end{proposition}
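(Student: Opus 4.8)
The plan is to determine $RC(C_n)$ by first establishing the general upper bound $RC(C_n)\le 6$ (already available via Theorem~\ref{T1} and the known fact $C(C_n)\le 6$), and then, for each residue/size regime, either exhibiting an explicit RD-partition attaining the claimed value or proving the matching upper bound for the sporadic small cases. The key structural observation I would use throughout is that for a cycle, a subset $R\subseteq V(C_n)$ is an RD-set precisely when $R$ is dominating and $V\setminus R$ induces no isolated vertex, i.e. $V\setminus R$ has no vertex both of whose cyclic neighbours lie in $R$; equivalently, walking around the cycle, the gaps of consecutive non-$R$ vertices all have length at least $2$, and the blocks of consecutive $R$-vertices all have length at most $2$ (so that domination holds). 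This ``run-length'' description reduces every verification to a short combinatorial check on cyclic binary words.

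\textbf{Lower bounds (constructions).} For the generic case $n\ge 12$ (and the remaining small $n$ not in $\{3,4,5,7,8,11\}$) I would construct an RD-partition $\pi=\{V_1,\dots,V_6\}$ with the restrained coalition graph being a sufficiently rich graph so that each $V_i$ has a partner. Concretely, I would place small ``marker'' sets at chosen positions so that each pair $V_i\cup V_j$ from a chosen set of pairs has all $R$-blocks of length $\le 2$ and all non-$R$-gaps of length $\ge 2$, while each individual $V_i$ fails to be an RD-set (it is too sparse to dominate). Because $C_n$ has a lot of freedom once $n$ is large, one can keep five of the six sets small and absorb the surplus vertices into the sixth, adjusting parities; I expect that a single periodic pattern of period dividing a small number, plus a bounded ``defect'' region, handles all large $n$ uniformly. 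For $RC(C_7)=RC(C_{11})=5$, $RC(C_4)=RC(C_8)=4$, and $RC(C_3)=RC(C_5)=3$ I would just display one explicit optimal RD-partition in each case (these are finite checks).

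\textbf{Upper bounds for the sporadic cases.} The harder direction is showing, for the exceptional values $n\in\{3,4,5,7,8,11\}$, that the naive bound $6$ (or $5$, or $4$) cannot be reached. Here I would argue by contradiction: suppose $\pi$ is an RD-partition with $k$ parts, $k$ exceeding the claimed value. Since $k\ge 3$, by the remark following Theorem~\ref{T1} the coalition graph is a star, so there is a ``hub'' set $V_1$ that belongs to every RD-coalition; hence every other $V_i$ satisfies: $V_1\cup V_i$ is an RD-set, but $V_i$ alone is not. Summing the run-length constraints, $V_1\cup V_i$ being an RD-set forces $|V_1|+|V_i|\ge \gamma_r(C_n)$, and one knows $\gamma_r(C_n)=n-2\lfloor n/3\rfloor$; combined with $\sum_i|V_i|=n$ this yields an inequality of the shape $n\ge |V_1|+|V_2|+(k-2)\ge \gamma_r(C_n)+(k-2)$ (this is exactly Proposition~\ref{gamma}), which already caps $k$ in the small cases. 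Where this counting bound is not tight enough (I anticipate this is the real obstacle, especially for $C_7$ and $C_{11}$), I would refine it: the hub $V_1$ must be simultaneously compatible with many different $V_i$, so $V_1$ must avoid creating a short gap or a long $R$-block in each union; a short case analysis on the cyclic positions of $V_1$ (how its vertices are distributed into runs) rules out the extra part. I expect the $C_{11}$ case to require the most care, since $\gamma_r(C_{11})=11-2\cdot 3=5$ gives only $k\le 8$ from pure counting, so the positional/run-length argument is essential there; the bound $RC(C_{11})\le 5$ should follow from showing that no single hub set can have RD-coalition partners splitting the remaining $\le 6$ vertices into $\ge 5$ independently-viable pieces.
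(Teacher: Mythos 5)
Your overall architecture (upper bound $6$ from Theorem~\ref{T1}, explicit constructions for the generic case, separate arguments for the sporadic $n$) matches the paper's, but your proof of the upper bounds for $n\in\{7,8,11\}$ rests on a false structural claim. You assert that for $k\ge 3$ the remark after Theorem~\ref{T1} forces the restrained coalition graph to be a star, so that some hub $V_1$ lies in every RD-coalition. That remark is conditional: the coalition graph is a star only when the partition contains a dominating set that is not an RD-set (and is $K_2$ when it contains two such sets); if every part is non-dominating, nothing of the sort follows. For cycles this is exactly what happens: in the RD-partition of $C_6$ into six singletons, $\{v_1\}$ and $\{v_4\}$ form an RD-coalition, as do $\{v_2\},\{v_5\}$ and $\{v_3\},\{v_6\}$, so the coalition graph contains a perfect matching and has no hub. (The star conclusion holds for trees because all pendant vertices must lie in every RD-set, hence in one common part; cycles have no pendant vertices.) Consequently your ``short case analysis on the cyclic positions of $V_1$'' has no single $V_1$ to analyse, and the counting bound of Proposition~\ref{gamma} (which, with $\gamma_r(C_n)=n-2\lfloor n/3\rfloor$, gives only $k\le 6$ for $C_7$ and $C_8$ and $k\le 8$ for $C_{11}$) is, as you concede, not tight. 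Without the star structure the analysis degenerates into an exhaustive search over all RD-partitions, which is precisely what the paper does (by computer) for $n=7,8,11$.

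Two further points. First, your run-length characterization of RD-sets on a cycle is incorrect: $R$ is an RD-set iff every maximal run of vertices of $V\setminus R$ has length exactly $2$ (at most $2$ for domination, at least $2$ for the restrained condition); there is no constraint on the lengths of runs of $R$-vertices. Your version would, for instance, reject the genuine RD-set $\{v_6,v_7,v_1,v_2,v_3\}$ of $C_7$ because it contains an $R$-run of length $5$. Since all your verifications are routed through this criterion, they would misclassify sets. Second, the lower-bound constructions are only promised (``a periodic pattern plus a bounded defect region''), whereas they are the entire content of the generic case; the paper exhibits them explicitly as a family covering all $n=3m+4k+7$ with $m\ge1$, $k\ge0$, together with $n=9,12,15$, and something equally concrete would be needed for your argument to be complete.
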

\begin{proof}
It is easy to verify that $RC(C_3)=RC(C_5)=3$, $RC(C_4)=4$, and $RC(C_6)=6$.
The restrained coalition numbers of cycles of order $n=7,8,11$    
can be found by computer (generation and check of all RD-partitions).
Examples of suitable RD-partitions and 
the corresponding  restrained coalition graphs are presented in 
Figs.~\ref{Fig6},\,\ref{Fig7}.
For cycles with $RC(C_n) = 6$, RD-partitions  
are shown in 
Fig.~\ref{Fig8} ($n=9,12,15$) and Fig.~\ref{Fig9} 
(other cycles).
The order of cycles in 
Fig.~\ref{Fig9} 
is equal to $n=3m+4k+7$, where $m \ge 1$ and $k \ge 0$. 
It is not hard to check that $n$ goes over
all values $10, 13, 14, 16, 17, \dots$ \ .
\end{proof}

\begin{figure}[h!]
\centering
\includegraphics[width=0.45\linewidth]{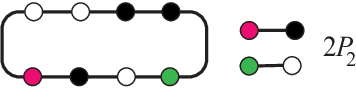}
\label{treeCRC}
\caption{Cycle $C_8$ having restrained coalition number 4.}  
\label{Fig6}
\end{figure}

\vspace{-3mm}

\begin{figure}[h!]
\centering
\includegraphics[width=0.6\linewidth]{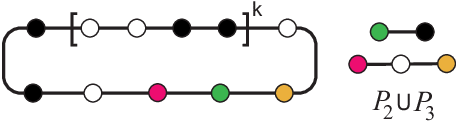}
\caption{Cycles $C_7$ and $C_{11}$ having restrained coalition number 5, $k = 0,1$.}  
\label{Fig7}
\end{figure}

\vspace{-3mm}

\begin{figure}[h!]
\centering
\includegraphics[width=0.55\linewidth]{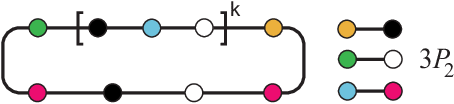}
\caption{Cycles $C_9, C_{12}$ and $C_{15}$ having restrained coalition number 6, $k =1,2,3$.}  
\label{Fig8}
\end{figure}

\newpage

\begin{figure}[h!]
\centering
\includegraphics[width=0.75\linewidth]{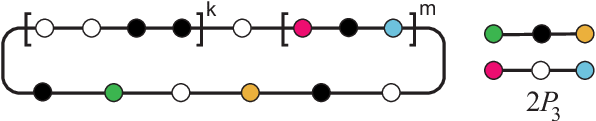}
\caption{Cycles $C_n$ of order $n=3m+4k+7$ having $RC(C_n) =6$, $m \ge 1$, $k \ge 0$.}  
\label{Fig9}
\end{figure}

\

\textbf{ Acknowledgment.}
This study was supported by the state contract of the Sobolev Institute of Mathematics (project number FWNF-2026-0011).


\begin{thebibliography}{99}

\bibitem{Alik2023}
S.~Alikhani, D.~Bakhshesh, H.~R.~Golmohammadi, E.~V.~Konstantinova,
Connected coalitions in graphs, 
{\it Discuss. Math. Graph Theory} 
{\bf 44} (2024) 1551--1566.

\bibitem{Bakh2023}
D.~Bakhshesh, M.~A.~Henning,  D.~Pradhan, 
On the coalition number of trees,
{\it Bull. Malays. Math. Sci. Soc.} 
{\bf 46}  (2023)  \# 95.

\bibitem{Bre2024}
B.~Bre\v{s}ar, M.~A.~Henning,
Best possible upper bounds on the restrained domination number of cubic graphs,
{\it J. Graph Theory}
{\bf 106} (2024) 763--815.

\bibitem{Che2026}
M.~Chellali, H.~Golmohammadi, N.~A.~Matrokhin, I.~I.~Takhonov, J.~C.~Valenzuela-Tripodoro,
Total restrained coalitions in graphs, 
{\it Comp. Appl. Math.} 
 {\bf 45} (2026) \# 50.

\bibitem{Dan2006}
P.~Dankelmann, J.~H.~Hattingh, M.~A.~Henning,  H.~C.~Swart, 
Trees with equal domination and restrained domination numbers. 
{\it J. Glob. Optim.}
{\bf  34} (2006) 597--607. 

\bibitem{Dob2025}
A.~A.~Dobrynin, A.~N.~Glebov,
Coalition graphs of connected domi\-na\-ti\-on par\-ti\-ti\-ons in subcubic graphs,
arXiv:2509.04204 [math.CO], (2025).

\bibitem{Dob2024}
A.~A.~Dobrynin, H.~Golmohammadi,
 The shortest cycle having the maximal number of coalition graphs,
{\it Discr. Math. Lett.}
{\bf 14} (2024)  21--26.

\bibitem{Dom1999}
G.~S.~Domke, J.~H.~Hattingh, M.~A.~Henning, L.~R.~Markus,
Restrained domination in trees,
{\it Discrete Math.}
{\bf 203} (1999) 61--69.

\bibitem{Du2012}
D.-Z.~Du, P.-J.~Wan, 
{\it Connected Dominating Set: Theory and Applications},
Springer, New York, 2012. 


\bibitem{Hat2020}
J.~H.~Hattingh, E.~J.~Joubert, 
Restrained and total restrained domination in graphs, 
In: T.~W. Haynes,  S.~T.~Hedetniemi,  M.~A.~Henning, (Eds.), 
{\it Topics in Domination in Graphs}, 
 Springer, Cham,  2020, 129--150.

\bibitem{Hayn2021a}
T.~W.~Haynes, J.~T.~Hedetniemi, S.~T.~Hedetniemi, A.~A.~McRae, R.~Mohan,
Upper bounds on the coalition number,
{\it Australas. J. Combin.} 
{\bf 80} (2021) 442--453.

\bibitem{Hay2023}
T.~W.~Haynes, J.~T.~Hedetniemi, S.~T.~Hedetniemi, A.~A.~McRae, R.~Mohan,
Coalition graphs,
{\it Comm. Combin. Optim.}
{\bf 8} (2023) 423--430.

\bibitem{Hayn2023a}
T.~W.~Haynes, J.~T.~Hedetniemi, S.~T.~Hedetniemi, A.~A.~McRae, R.~Mohan,
Coalition graphs of paths, cycles, and trees,
{\it Discuss. Math. Graph Theory} 
{\bf 43} (2023) 931--946.

\bibitem{Hayn2021}
T.~W.~Haynes, S.~T.~Hedetniemi, M.~A.~Henning (Eds.),
{\it Structures of Domination in Graphs},
Springer, Cham, 2021.

\bibitem{Hay2023book}
T.~W.~Haynes, S.~T.~Hedetniemi, M.~A.~Henning (Eds.),
{\it Domination in Graphs, Core Concepts},
Springer,  Cham, 2023.

\bibitem{Hay1998}
T.~W.~Haynes, J.~T.~Hedetniemi, S.~T.~Hedetniemi, P.~J.~Slater,
{\it Fundamentals of domination in graphs},
 Chapman and Hall/CRC, New York, 1998.

\bibitem{Hay2020}
T.~W.~Haynes, J.~T.~Hedetniemi, S.~T.~Hedetniemi, A.~A.~McRae, R.~Mohan,
Introduction to coalitions in graphs,
{\it  AKCE Int. J. Graphs Combin.}
{\bf 17} (2020) 653--659.

\bibitem{Hay2023b}
 T.~W.~Haynes, J.~T.~Hedetniemi, S.~T.~Hedetniemi, A.~A.~McRae, R.~Mohan,
Coalition graphs of paths, cycles, and trees,
{\it Discuss. Math. Graph Theory}
{\bf 43} (2023) 931--946.

\bibitem{Hen2013}
M.~A.~Henning, A.~Yeo,
{\it Total Domination in Graphs},
Springer, Cham, 2013.

\bibitem{Nes2025}
A.~H.~Shola~Nesam, S.~Amutha, N.~Anbazhagan,
Restrained domination coalition number of paths and cycles,
{\it Stat. Optim. Inf. Comput.}
2025 (in press),
https://doi.org/10.19139/soic-2310-5070-2962 

\bibitem{Zel2005}
B.~Zelinka,
Remarks on restrained domination and total restrained domination in graphs,
{\it  Czechoslovak Math. J.}
{\bf  55}  (2005)  393--396.

\end{thebibliography}
\end{document}